\documentclass[12pt]{article}
\RequirePackage[colorlinks,citecolor=blue,urlcolor=blue]{hyperref}

\usepackage{latexsym,amssymb,amsmath,amsfonts,amsthm}
\usepackage{amsthm,amsmath}
\usepackage{float}
\usepackage{enumerate}
\usepackage{epic}
\usepackage{bbm}
\usepackage{subfigure}
\usepackage{mathbbol}
\usepackage{graphicx}
\usepackage[toc,page]{appendix}
\usepackage{multirow}
\usepackage{amsfonts}
\usepackage[all]{xy}
\usepackage{caption}

\newtheorem{theorem}{Theorem}

\newtheorem{lemma}{Lemma}[section]

\newtheorem{prop}{Proposition}
\newtheorem{corollary}{Corollary}

\def\beq{ \begin{equation} }
\def\eeq{ \end{equation} }

\def\ep{\epsilon}

\def\square{\vcenter{\vbox{\hrule height .4pt
  \hbox{\vrule width .4pt height 5pt \kern 5pt
        \vrule width .4pt} \hrule height .4pt}}}

\def\RR{\mathbb{R}}
\def\ZZ{\mathbb{Z}}

\begin{document}
\title{Weak Convergence of a Seasonally Forced Stochastic Epidemic Model}
\author{Alun Lloyd and Yuan Zhang}
\maketitle

\begin{abstract}
In this study we extend the results of Kurtz (1970,1971) to show the weak convergence of epidemic processes that include explicit time dependence, specifically where the transmission parameter, $\beta(t)$, carries a time dependency. We first show that when population size goes to infinity, the time inhomogeneous process converges weakly to the solution of the mean-field ODE. Our second result is that, under proper scaling, the central limit type fluctuations converge to a diffusion process.
\end{abstract}

\section{Introduction}
Much of mathematical epidemiology draws upon deterministic descriptions of disease transmission processes \cite{Anderson91}. Deterministic models are attractive, in part, because they are easy to analyze and simulate. The importance of stochastic effects on disease transmission processes has, however, long been appreciated \cite{Bailey50,Bartlett56,Kendall56} and so it is natural to ask about the relationship between stochastic and deterministic models of a given process. 

Kurtz \cite{KZ} showed, for a general class of population models, weak convergence of the stochastic model to the corresponding deterministic model as the system size, $N$, tends to infinity. Further, in \cite{KZ71} Kurtz provided a central limit theorem-type result that explored the nature of this convergence in more detail, revealing a diffusion process behavior.   These limiting results have been used to justify the use of the multivariate normal approximation introduced by \cite{Whittle57} to close moment equations for nonlinear population models. Use of this approach (e.g. \cite{Isham91,Lloyd04}) allows one to assess the magnitude of stochastic fluctuations likely to be seen about the deterministic solution and hence determine the adequacy of a deterministic description.

Many epidemic processes, however, have an explicit time dependence \cite{Grassly06}. For instance, an infectious agent may be more transmissible at certain times of the year than at others. Several biological and social mechanisms can give rise to such seasonality, including sensitivity of certain viruses to humidity and congregation of children during school sessions. 

In this study, we extend the results of Kurtz to epidemic processes that include explicit time dependence, specifically where the transmission parameter, $\beta(t)$, carries a time dependency. The paper is organized as follows: in Section 2 we introduce the model. Section 3 provides a weak convergence result and in Section 4 a central limit theorem-type result is given. Simulation results are shown in Section 5. 

\section{The Model}

We consider the seasonal SIR (susceptible/infective/recovered) model taking value $(S^N(t),I^N(t), R^N(t))\in (\ZZ^+\cup \{0\})^3$ with transition rates as follows: 
\begin{table}[!hbp]
\center
\begin{tabular}{|c|c|c|c|}
\hline 
Event & Transition & Rate at which event occurs \\
\hline
Birth & $S\rightarrow S+1$ & $\nu (S+I+R)$\\
\hline
Susceptible Death  & $S\rightarrow S-1$ & $\nu S$\\
\hline
Infection & $S\rightarrow S-1, \  I\rightarrow I+1$ & $\beta(t)SI/(S+I+R)$\\
\hline
Recovery & $I\rightarrow I-1, \ R\to R+1$ & $\gamma I$\\
\hline
Infectious Death & $I\rightarrow I-1$ & $\nu I$\\
\hline
Recovered Death & $R\rightarrow R-1$ & $\nu R$\\
\hline
\end{tabular}
\caption{Transition rates for the seasonal SIR model}
\end{table}

\noindent Here, $\nu$ denotes the {\it per capita} birth and death rate, which we assume to be equal. $\gamma$ denotes the {\it per capita} recovery rate, implying that the average duration of infection is $1/\gamma$. $N$ is equal to the initial population size, $N=S(0)+I(0)+R(0)$, although, as discussed further below, it should be noted that the population size is not constant for this model, despite having equal {\it per capita} demographic parameters.

The transmission parameter, $\beta(t)$ is assumed to be a periodic function with period one. For definiteness, we take the following sinusoidal form: $\beta(t)=\beta_0[1+\beta_1\cos(2\pi t)]$, where $\beta_0>0$ and $\beta_1\in(0,1)$. It should be noted, however, that our results apply to a much broader class of functions.

From the transition rates above, it is easy to see that the stochastic model above is associated with the following mean-field ODE:
\beq
\label{MnODE}
\begin{aligned}
&\frac{d x_t}{d t}=F_1(x_t,y_t,z_t,t)= \nu(y_t+z_t)-\beta(t)x_ty_t\\
&\frac{d y_t}{d t}=F_2(x_t,y_t,z_t,t)=\beta(t)x_ty_t-(\nu+\gamma)y_t\\
&\frac{d z_t}{d t}=F_3(x_t,y_t,z_t,t)=\gamma y_t- \nu z_t. 
\end{aligned}
\eeq
where $x_t$, $y_t$ and $z_t$ now represent the fractions of the population in each state.

\section{Weak Convergence to the Solution of ODE}
We first prove the following weak convergence result:
\begin{theorem}
Let $T^N(s)=S^N(s)+I^N(s)+R^N(s)$. For any $t<\infty$, as $N\rightarrow\infty$, the stochastic model
$$
\left(\frac{S^N(s)}{T^N(s)},\frac{I^N(s)}{T^N(s)}, \frac{R^N(s)}{T^N(s)}\right)
$$ 
with initial values 
\beq
\label{CIN}
\left(\frac{S^N(0)}{N},\frac{I^N(0)}{N}, \frac{R^N(0)}{N}\right)\rightarrow (x_0,y_0,z_0)
\eeq
converges weakly to $(x_s,y_s,z_s)$ the solution of the mean-field ODE equation (\ref{MnODE}) in $[0,t]$ with initial values $(x_0,y_0,z_0)$. 
\end{theorem}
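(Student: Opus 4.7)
\medskip\noindent\textbf{Proof plan.} The approach is to adapt Kurtz's martingale / time-change method, with two new features to control: the time-dependence of $\beta(t)$ and the random normalization by $T^N(s)$ rather than by the deterministic $N$. First I would write each component as a difference of independent time-changed unit-rate Poisson processes $Y_i$; for example,
\beq
\label{PoissRep}
S^N(s) = S^N(0) + Y_1\!\left(\int_0^s \nu T^N(u)\,du\right) - Y_2\!\left(\int_0^s \nu S^N(u)\,du\right) - Y_3\!\left(\int_0^s \beta(u)\frac{S^N(u)I^N(u)}{T^N(u)}\,du\right),
\eeq
with analogous expressions for $I^N$ and $R^N$. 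Dividing by $N$ and using the strong law $N^{-1}Y_i(Nu)\to u$ uniformly on compacts, the rescaled triple $(\tilde S^N,\tilde I^N,\tilde R^N):=N^{-1}(S^N,I^N,R^N)$ satisfies an integral equation that, up to a noise term vanishing in probability as $N\to\infty$, coincides with (\ref{MnODE}) except that $\tilde T^N:=T^N/N$ appears in the denominator of the infection term.

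The next step is to control this random denominator. Since births and deaths both occur at total rate $\nu T^N$, the process $M^N_s := T^N(s)-N$ is a mean-zero martingale with predictable quadratic variation $\int_0^s 2\nu T^N(u)\,du$ and $\mathbb{E}[T^N(s)]\equiv N$; Doob's $L^2$ inequality then gives $\mathbb{E}\sup_{s\le t}(\tilde T^N(s)-1)^2 = O(N^{-1})$, and in particular $\sup_{s\le t}|\tilde T^N(s)-1|\to 0$ in probability. Introducing the stopping time $\tau_N := \inf\{s\ge 0 : |\tilde T^N(s)-1|>1/2\}$, on the event $\{\tau_N > t\}$ the rescaled infection rate $\beta(u)\tilde S^N\tilde I^N/\tilde T^N$ is uniformly Lipschitz in the state variables with constant depending only on $\sup_{[0,t]}\beta<\infty$. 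A standard Gronwall argument applied to the difference between $(\tilde S^N,\tilde I^N,\tilde R^N)$ and the ODE solution $(x,y,z)$ then yields uniform convergence in probability on $[0,t]$, hence weak convergence to this deterministic limit. The claim for $(S^N/T^N,I^N/T^N,R^N/T^N)$ follows by the continuous mapping theorem, using that $\tilde T^N$ stays bounded away from zero with probability tending to one.

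\medskip\noindent\textbf{Main obstacle.} The principal technical point is the coupling between $T^N$ and the infection rate: since $T^N$ appears nonlinearly in the denominator of one of the jump rates, one cannot simply apply Kurtz's theorem in standard form by conditioning on $T^N$. The stopping-time truncation described above is what separates the two issues; once the event $\{\tau_N>t\}$ is controlled, the time-inhomogeneity in $\beta(\cdot)$ is a minor nuisance handled by the continuity and boundedness of $\beta$ on $[0,t]$, and the remainder of the argument mirrors Kurtz (1970).
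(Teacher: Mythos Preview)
Your proposal is correct and shares the same high-level architecture as the paper's proof: control the random normalizer $T^N$ via a stopping time, obtain convergence of $N^{-1}(S^N,I^N,R^N)$ to the ODE solution by a Gronwall argument, and then pass from normalization by $N$ to normalization by $T^N$. The technical implementations differ in two places, and it is worth noting what each buys.

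First, to show $\tilde T^N\to 1$ uniformly on $[0,t]$, you exploit directly that $T^N(s)-N$ is a mean-zero martingale with predictable quadratic variation $\int_0^s 2\nu T^N(u)\,du$ and apply Doob's $L^2$ inequality. The paper instead invokes a general weak-convergence theorem for Markov processes (Theorem~7.1 in Section~8.7 of \cite{DSD}) to show $T^N/N\Rightarrow 1$. Your route is more elementary and self-contained, and in fact yields the quantitative rate $O(N^{-1})$ for $\mathbb{E}\sup_{s\le t}(\tilde T^N(s)-1)^2$ rather than a bare weak limit.

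Second, for the main convergence step you use Kurtz's time-change representation by independent unit-rate Poisson processes and the functional strong law $N^{-1}Y_i(N\cdot)\to\mathrm{id}$. The paper instead constructs an explicit \emph{truncated} process $(\hat S^N,\hat I^N,\hat R^N)$ with rates capped at level $2N$, couples it to the original via a graphical representation so that they agree up to $\tau_N$, and then applies Dynkin's formula with bounded smooth test functions to control the martingale remainder. Your version is shorter for Theorem~1 on its own; the paper's truncation-and-coupling machinery is heavier here but is reused verbatim in Section~4, where bounded rates are needed to run the characteristic-function argument for the CLT (Theorem~\ref{ConDif}). So the paper's extra overhead is an investment that pays off later.

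One small point of presentation: the SLLN step requires the integrated rates appearing as arguments of the $Y_i$ to be $O(N)$, which is only guaranteed on $\{\tau_N>t\}$; you should make explicit that the SLLN is applied \emph{after} restricting to this event, not before.
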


\begin{proof}
To show the weak convergence we will first use the idea of graphical representation as in \cite{DStF} to construct the inhomogenous Markov process $(S^N(t),I^N(t),R^N(t))$ from the following family of independent Poisson processes and i.i.d. uniform random variables.

[Remark]: At this point,  the graphical representation may seem
unnecessary in defining in the process. But the reason we want to use
the technique is to allow us to couple the original system and its
``truncated" version (see Table 2 for details) in the same space, and
keep them coinciding with each other for a long time. So here we
strongly recommend the reader to compare the construction below with
the one for the truncated process, to see how we can construct the two systems in the same probability space using the same family of Poisson processes and random variables.  

The construction is as follows: 
\begin{itemize}
\item For all $x\in \ZZ^+$ define a family of independent Poisson processes at rate $\nu$, denoted by $\{B_n^x: x\in \ZZ^+, n\ge 1\}$. At each space time point $(x,B_n^x)$, i.e., the $n$th jumping time of the Poisson process associate with point $x$, we have $S^N=S^N+1$ if and only if $T^N(B_n^x-)=S^N(B_n^x-)+I^N(B_n^x-)+R^N(B_n^x-)\ge x$. 
\item For all $x\in \ZZ^+$ define a family of independent Poisson processes at rate $\nu$, denoted by $\{DS_n^x: x\in \ZZ^+, n\ge 1\}$ ,which are also independent to $\{B_n^x\}$. At each space time point $(x,DS_n^x)$, we have $S^N=S^N-1$ if and only if $S^N(DS_n^x-)\ge x$. 
\item For all $x\in \ZZ^+$ define a family of independent Poisson processes at rate $\nu$, denoted by $\{DI_n^x: x\in \ZZ^+, n\ge 1\}$ which are also independent to processes defined above. At each space time point $(x,DI_n^x)$, we have $I^N=I^N-1$ if and only if $I^N(DI_n^x-)\ge x$. 
\item For all $x\in \ZZ^+$ define a family of independent Poisson processes at rate $\nu$, denoted by $\{DR_n^x: x\in \ZZ^+, n\ge 1\}$ which are also independent to processes defined above. At each space time point $(x,DR_n^x)$, we have $R^N=R^N-1$ if and only if $I^N(DR_n^x-)\ge x$.
\item For all $x\in \ZZ^+$ define a family of independent Poisson processes at rate $\gamma$, denoted by $\{RI_n^x: x\in \ZZ^+, n\ge 1\}$ which are also independent to processes defined above. At each space time point $(x,RI_n^x)$, we have $I^N=I^N-1$ and $R^N=R^N+1$ if and only if $I^N(RI_n^x-)\ge x$. 
\item For all $x\in \ZZ^+$ define a family of independent Poisson processes at rate $\beta_0(1+\beta_1)$,  denoted by $\{IN_n^{x}: x\in \ZZ^+, n\ge 1\}$,  which are also independent to processes defined above. Moreover, define a family of i.i.d. random variables $\{U_n^{x}\sim U(0,1): x\in Z^+, n\ge 1\}$ which are also independent to processes defined above. At each space-time point $[x,IN_n^x]$,  we have $S^N=S^N-1, I^N=I^N+1$ if and only if that $S^N(IN_n^x-)\ge x$ and 
$$
U_n^{x}\le \frac{\beta(IN_n^{x})I^N(IN_n^{x}-)}{\beta_0(1+\beta_1)T^N(IN_n^{x}-)}. 
$$
\end{itemize}
To show the construction above is well defined and is actually the seasonal SIR model we want, we first refer to \cite{DStF} to show the process never explodes. To show this, consider a monotone increasing process $M_t$ defined as follows:
\begin{itemize}
\item At each space time point $(x,B_n^x)$, $M_t=M_{B_n^x-}+1$ if and only if $M_{B_n^x-}\ge x$. 
\end{itemize}
Then by definition it is easy to see that $M_t\ge S^N(t)+I^N(t)+R^N(t)$ for all $t\ge 0$. Noting that for all $k$, 
$$
\sum_{i=k}^\infty\frac{1}{i}=\infty,
$$
so the bigger process $M_t$ never explodes. This implies at each time
$t$, all the jumps in our system must be in the following finite
family of Poisson processes: $\{B_n^x\}_{x=1}^{M_t},
\{DS_n^x\}_{x=1}^{M_t}, \{DI_n^x\}_{x=1}^{M_t},
\{DR_n^x\}_{x=1}^{M_t}, \{RI_n^x\}_{x=1}^{M_t}$,  and
$\{I_n^x\}_{x=1}^{M_t}$. Note that a Poisson processes (with
probability one) has only finite jumps in a finite time interval, it
is straightforward to verify that the process we defined above never
explodes. Then for each time $t$, one can easily check the transition
rates given the current value of $(S,I,R)$ in the process we construct
in the bullet list above. According to the Kolmogorov forward equation and Theorem 1 about Poisson thinning in \cite{LS}, it is easy to see that the process defined by the devices above has the same transition rates as in Table 1. Thus the process defined above is exactly the stochastic seasonal SIR model $(S^N(t),I^N(t),R^N(t))$ that we want.  

Note that, because the population size is unbounded, the transition rates of the system above can possibly (but not likely) go large. Our next step is to introduce a truncated version of the seasonal SIR model. Consider $(\hat S^N(t),\hat I^N(t),\hat R^N(t))$ to be the truncated version with new transition rates as follows: 
\begin{table}[H]
\center
\begin{tabular}{|c|c|c|c|}
\hline 
Event & Transition & Rate at which event occurs \\
\hline
Birth & $\hat S\rightarrow \hat S+1$ & $\nu [(\hat S+\hat I+\hat R)\wedge 2N]$\\
\hline
Susceptible Death  & $\hat S\rightarrow \hat S-1$ & $\nu (\hat S\wedge 2N)$\\
\hline
Infection & $\hat S\rightarrow \hat S-1, \  \hat I\rightarrow \hat I+1$ & $\beta(t)(\hat S\wedge 2N)\hat I/(\hat S+\hat I+\hat R)$\\
\hline
Recovery & $\hat I\rightarrow \hat I-1, \ \hat R\to \hat R+1$ & $\gamma (\hat I\wedge 2N)$\\
\hline
Infectious Death & $\hat I\rightarrow \hat I-1$ & $\nu (\hat I\wedge 2N)$\\
\hline
Recovered Death & $\hat R\rightarrow \hat R-1$ & $\nu (\hat R\wedge 2N)$\\
\hline
\end{tabular}
\caption{Transition rates for the truncated model}
\end{table}
\noindent Here, $a\wedge b = \min(a,b)$.  

By definition, the transition rate of  $(\hat S^N(t),\hat I^N(t),\hat R^N(t))$ is no larger than 
$$
MN=\max\{2\nu N, 2\beta_0(1+\beta_1)N, 2\gamma N\}
$$
and thus is bounded. Moreover, using the same family of Poisson processes and uniform random variables in the bullet list above, we can construct a copy of the truncated process as follows: 

\begin{itemize}
\item  At each space time point $(x,B_n^x)$, we have $\hat S^N=\hat S^N+1$ if and only if $\hat T^N(B_n^x-)=\hat S^N(B_n^x-)+\hat I^N(B_n^x-)+\hat R^N(B_n^x-)\ge x$ and $x\le 2N$. 
\item  At each space time point $(x,DS_n^x)$, we have $\hat S^N=\hat S^N-1$ if and only if $\hat S^N(DS_n^x-)\ge x$ and $x\le 2N$. 
\item  At each space time point $(x,DI_n^x)$, we have $\hat I^N=\hat I^N-1$ if and only if $\hat I^N(DI_n^x-)\ge x$ and $x\le 2N$.
\item  At each space time point $(x,DR_n^x)$, we have $\hat R^N=\hat R^N-1$ if and only if $\hat R^N(DR_n^x-)\ge x$ and $x\le 2N$.
\item  At each space time point $(x,RI_n^x)$, we have $\hat I^N=\hat I^N-1$ and $\hat R^N=\hat R^N+1$ if and only if $\hat I^N(RI_n^x-)\ge x$ and $x\le 2N$.
\item  At each space-time point $[x,IN_n^x]$, we have $\hat S^N=\hat S^N-1, \hat I^N=\hat I^N+1$ if and only if that $\hat S^N(IN_n^x-)\ge x$, $x\le 2N$ and
$$
U_n^{x}\le \frac{\beta(IN_n^{x})\hat I^N(IN_n^{x}-)}{\beta_0(1+\beta_1)\hat T^N(IN_n^{x}-)}.
$$
\end{itemize}

From the gadgets as above, it is easy to check that we defined the truncated process $(\hat S^N(t),\hat I^N(t),\hat R^N(t))$ in the same probability space as the original process $(S^N(t),I^N(t),R^N(t))$. Moreover, consider the stopping time 
\beq
\label{STD}
\tau_N=\inf\{t: T^N(t)>2N\}. 
\eeq
Then by definition we immediately have 
\begin{prop}
$(\hat S^N(t),\hat I^N(t),\hat R^N(t))\equiv (S^N(t),I^N(t),R^N(t))$ on $[0,\tau_N)$. 
\end{prop}
The next step we will show is for any given $t_0<\infty$, the total size of the population will stay near the initial value with high probability by time $t_0$ so the truncated process will, with high probability, stay together with the original one, when $N$ is large. 
\begin{lemma} \label{Stop}
For any $\ep>0$, define the stopping time $\tau_N^\ep$ to be the first time the total population size is changed by $\ep N$, i.e.
\beq
\label{STDep}
\tau_N^\ep=\inf\{t: |T^N(t)-N|>\ep N\}.
\eeq
Then for any $t_0<\infty$
\beq
\label{PST}
\lim_{N\rightarrow\infty}P(\tau^\ep_N>t_0)\rightarrow1. 
\eeq
\end{lemma}
\begin{proof}
Note that $T^N(t)$ itself also forms a Markov process with transition rates 
$$
T\to T\pm 1 \textrm{ at rate } \nu T. 
$$
So for $\Sigma^N(t)=T^N(t)/N$, we have $\Sigma^N(0)=1$ and let $Q_N(x,\cdot)$ be the measure given by the transition rate, i.e.
$$
Q_N(x,\cdot)=Nx\cdot\delta\left(x+\frac{1}{N}\right)+Nx\cdot\delta\left(x-\frac{1}{N}\right). 
$$
Then we can define
\begin{align*}
& a^N(x)=\int_{|y-x|<1} (y-x)^2Q_N(x,dy)\\
& b^N(x)=\int_{|y-x|<1} (y-x)Q_N(x,dy)\\
& \Delta_\ep^N(x)=Q_N\left(x, B^c(x,\ep)\right), 
\end{align*}
and
$$
a(x)= b(x)=0. 
$$
Then it is straightforward to check that for any $\ep>0$ and $R_0<\infty$, 
\begin{enumerate}[(i)]
\item $\lim_{N\rightarrow\infty}\sup_{|x|\le R_0} |a^N(x)-a(x)|=0$.
\item $\lim_{N\rightarrow\infty}\sup_{|x|\le R_0} |b^N(x)-b(x)|=0$.
\item $\lim_{N\rightarrow\infty}\sup_{|x|\le R_0}  \Delta_\ep^N(x)=0$.
\end{enumerate}
Thus according to Theorem 7.1 in Section 8.7 of \cite{DSD}, we have that as $N\rightarrow \infty$
\beq
\label{WC1}
\Sigma^N(t)\Rightarrow y(t) 
\eeq
and $y(t)$ is the solution of
$$
dy=a(y)dt+b(y)dB_t=0
$$
with $y(0)=1$, which is $y(t)\equiv 1$. And the proof is complete.

\end{proof}

Lemma \ref{Stop} allows us to concentrate on the truncated process $(\hat S^N(t),\hat I^N(t),\hat R^N(t))$, where the transition rates are bounded. Now consider a twice continuously differentiable function $f$ on $\RR$ such that $f(x)=x$ on $[0,3]$, $f(x)\equiv -0.5$ on $(-\infty,-1]$ and $f(x)\equiv 3.5$ on $[4,\infty)$. Then let
$$
f_1(a_1,a_2,a_3)=f\left(\frac{a_1}{N}\right). 
$$
Similarly, we can also define
$$
f_2(a_1,a_2,a_3)=f\left(\frac{a_2}{N}\right)$$
and 
$$
f_3(a_1,a_2,a_3)=f\left(\frac{a_3}{N}\right)$$
First we note that  $f_1$, $f_2$ and $f_3$ are all bounded twice continuously differentiable functions in $\RR^3$ and that 
\beq
\label{FS}
f_1\left(\hat S^N(t),\hat I^N(t),\hat R^N(t)\right)=\frac{\hat S^N(t)}{N}=\frac{S^N(t)}{N}
\eeq
on $[0,\tau_N)$. Thus, using the inhomogenous Dynkin's formula (see, for example, Section 7.3 of \cite{FBH}), we have 
\small
\beq
\begin{aligned}
M_t=&f_1\left(\hat S^N(t),\hat I^N(t),\hat R^N(t)\right)-f_1\left(\hat S^N(0),\hat I^N(0),\hat R^N(0)\right)\\
&-\int_0^t \hat{\mathcal{A}}_s\left[f_1\left(\hat S^N(s),\hat I^N(s), \hat R^N(s)\right)\right]ds
\end{aligned}
\eeq
\normalsize
is a martingale with mean 0. Here $\hat{\mathcal{A}}_s\left[f_1\left(\hat S^N(s),\hat I^N(s), \hat R^N(s)\right)\right]$ is the infinitesimal generator of the truncated process, applying on $f_1$, i.e. 
\beq
\label{InfgenH}
\begin{aligned}
\hat{\mathcal{A}}_s&\left[f_1\left(\hat S^N(s),\hat I^N(s), \hat R^N(s)\right)\right]\\
&=\hat\lambda_b\left[f_1\left(\hat S^N(s)+1,\hat I^N(s), \hat R^N(s)\right)-f_1\left(\hat S^N(s),\hat I^N(s), \hat R^N(s)\right)\right]\\
&+\hat\lambda_{ds}\left[f_1\left(\hat S^N(s)-1,\hat I^N(s), \hat R^N(s)\right)-f_1\left(\hat S^N(s),\hat I^N(s), \hat R^N(s)\right)\right]\\
&+\hat\lambda_{in}(s)\left[f_1\left(\hat S^N(s)-1,\hat I^N(s)+1, \hat R^N(s)\right)-f_1\left(\hat S^N(s),\hat I^N(s), \hat R^N(s)\right)\right]\\
&+\hat\lambda_{ri}\left[f_1\left(\hat S^N(s),\hat I^N(s)-1, \hat R^N(s)+1\right)-f_1\left(\hat S^N(s),\hat I^N(s), \hat R^N(s)\right)\right]\\
&+\hat\lambda_{di}\left[f_1\left(\hat S^N(s),\hat I^N(s)-1, \hat R^N(s)\right)-f_1\left(\hat S^N(s),\hat I^N(s), \hat R^N(s)\right)\right]\\
&+\hat\lambda_{dr}\left[f_1\left(\hat S^N(s),\hat I^N(s), \hat R^N(s)-1\right)-f_1\left(\hat S^N(s),\hat I^N(s), \hat R^N(s)\right)\right]
\end{aligned}
\eeq
where $\hat\lambda_b, \hat\lambda_{ds},  \hat\lambda_{in},  \hat\lambda_{ri}, \hat\lambda_{di}$ and $\hat\lambda_{dr}$
are the transition rates of $\left(\hat S^N(s),\hat I^N(s), \hat R^N(s)\right)$ given by the corresponding entires of table 2. 

Then according to Lemma 3.3 in \cite{DZ}, it is straightforward to show that $M_t$ is a martingale of finite variation, so that it has quadratic variation:
$$
[M]_{t_0}=\sum_{t\in \Pi_{t_0}} \left[f_1\left(\hat S^N(t),\hat I^N(t),\hat R^N(t)\right)-f_1\left(\hat S^N(t^-),\hat I^N(t^-),\hat R^N(t^-)\right)\right]^2
$$
where $\Pi_{t_0}$ is the the set of jumping times before time $t_0$. Thus there exists some constant $M_0$ such that for any $t>0$
$$
E(M_{t_0}^2)=E[M]_{t_0}\le \frac{2M_0}{N}
$$
and hence
\beq
\label{BML2}
E\left( \sup_{0\le s\le t} M_s^2\right)\le \frac{8M_0}{N},
\eeq
which implies for any $\ep\in (0,1/2)$ and $t_0<\infty$
\beq
\label{NPa}
\lim_{N\rightarrow \infty}P(|M_s|<\ep \textrm{ for all } s\in [0,t_0])=1.
\eeq
Consider the event
$$
A_N^{(1)}=\{(|M_s|<\ep \textrm{ for all } s\in [0,t_0]\}\cap \{\tau_N^\ep>t_0\}. 
$$
By equations (\ref{FS}) and (\ref{InfgenH}),  we have for any path in $A_N^{(1)}$ and any time $s< t_0$,
$$
\begin{aligned}
\hat{\mathcal{A}}_s&\left[f_1\left(\hat S^N(s),\hat I^N(s), \hat R^N(s)\right)\right]\\
&=\nu\frac{\left[S^N(s)+I^N(s)+R^N(s)\right]}{N}-\frac{\nu S^N(s)}{N}- \beta(s) \frac{I^N(s) S^N(s)}{N\left[S^N(s)+I^N(s)+R^N(s)\right]}
\end{aligned}
$$
which implies 
$$
\begin{aligned}
&\left|\hat{\mathcal{A}}_s\left[f_1\left(\hat S^N(s),\hat I^N(s), \hat R^N(s)\right)\right]-F_1\left(S^N(s),I^N(s), R^N(s)\right)\right| \le 8\beta(1+\beta_1)\ep
\end{aligned}
$$
for all paths in $A_N^{(1)}$ and times $s< t_0$. So there is $C_0=8\beta(1+\beta_1)\ep t_0+1$ such that, given the event $A_N^{(1)}$, 
\beq
\label{ODE1}
\sup_{0\le t\le t_0}\left|\frac{S^N(t)}{N}-\frac{S^N(0)}{N}-\int_0^t F_1\left(S^N(s),I^N(s), R^N(s)\right) ds\right|<C_0\ep. 
\eeq
Repeat exactly the same process as above with $f_2$ and $f_3$, we similarly have high probability events $A^{(2)}_N$ and $A^{(3)}_N$ such that under $A^{(2)}_N$
\beq
\label{ODE2}
\sup_{0\le t\le t_0}\left|\frac{I^N(t)}{N}-\frac{I^N(0)}{N}-\int_0^t F_2\left(S^N(s),I^N(s), R^N(s)\right)ds\right|<C_0\ep
\eeq
and under  $A^{(3)}_N$
\beq
\label{ODE3}
\sup_{0\le t\le t_0}\left|\frac{R^N(t)}{N}-\frac{R^N(0)}{N}-\int_0^t F_3\left(S^N(s),I^N(s), R^N(s)\right) ds\right|<C_0\ep.
\eeq
Then consider the following high probability event $A_N=A^{(1)}_N\cap A^{(2)}_N\cap A^{(3)}_N$. Combining inequalities 
(\ref{ODE1}), (\ref{ODE2}) and (\ref{ODE3}), and noting that the derivatives $F_1$, $F_2$ and $F_3$ are Lipschitz functions on
$\RR^+\times[0,3]^3$, then a standard ODE argument (see Theorem (2.11) in \cite{KZ}, for example) shows that there is some $C_1<\infty$ such that 
$$
\left| \frac{S^N(t)}{N}- x_t\right|\le\left( \left| \frac{S^N(0)}{N}- x_0\right|+C_0\ep\right) e^{C_1t},
$$
$$
\left| \frac{I^N(t)}{N}- y_t\right|\le\left( \left| \frac{I^N(0)}{N}- y_0\right|+C_0\ep\right) e^{C_1t},
$$
and that
$$
\left| \frac{R^N(t)}{N}- z_t\right|\le\left( \left| \frac{R^N(0)}{N}- z_0\right|+C_0\ep\right) e^{C_1t}
$$
for all $t\in [0,t_0]$. Moreover, recalling the definition of $\tau_N^\ep$, for all paths in $A_N$ and any $t\in[0,t_0]$, we have 
\begin{align*}
\left| \frac{S^N(t)}{T^N(t)}- x_t\right|&\le \left| \frac{S^N(t)}{N}- x_t\right|+\left| \frac{S^N(t)(T^N(t)-N)}{T^N(t)N}\right| \\
&= \left| \frac{S^N(t)N\ep}{T^N(t)N}\right|+\left( \left| \frac{S^N(0)}{N}- x_0\right|+C_0\ep\right) e^{C_1t} \\
&\le \ep+\left( \left| \frac{S^N(0)}{N}- x_0\right|+C_0\ep\right) e^{C_1t}. 
\end{align*}
Similarly,
$$
\left| \frac{I^N(t)}{T^N(t)}- y_t\right|\le\ep+\left( \left| \frac{I^N(0)}{N}- y_0\right|+C_0\ep\right) e^{C_1t},
$$
and
$$
\left| \frac{R^N(t)}{T^N(t)}- z_t\right|\le\ep+\left( \left| \frac{R^N(0)}{N}- z_0\right|+C_0\ep\right) e^{C_1t}. 
$$
Since $\ep$ can be arbitrarily small and because of equation (\ref{CIN}), the proof of Theorem 1 is complete. 
\end{proof}

Form the calculations above, one immediately has the following corollary:
\begin{corollary}
Consider $\xi^N_t=(S^N(t),I^N(t), R^N(t))/N\in [(\ZZ^+\cup \{0\})/N]^3$. Then with initial value
$$
\label{CIN}
\left(\frac{S^N(0)}{N},\frac{I^N(0)}{N}, \frac{R^N(0)}{N}\right)\rightarrow (x_0,y_0,z_0),
$$
and any $t>0$, $\xi^N_t$ converges weakly to $\xi_s=(x_s,y_s,z_s)$ the solution of the mean-field ODE equation (\ref{MnODE}) in $[0,t]$ with initial values $(x_0,y_0,z_0)$. 
\end{corollary}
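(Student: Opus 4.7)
The plan is to observe that the proof of Theorem 1 already contains almost everything needed, and that the statement of the corollary is actually a direct byproduct of the intermediate bounds rather than of the final division by $T^N(t)$. Specifically, on the high-probability event $A_N=A_N^{(1)}\cap A_N^{(2)}\cap A_N^{(3)}$ constructed in the proof (which combines Lemma~\ref{Stop} with the martingale estimate (\ref{BML2})), the Gronwall-type argument invoked from \cite{KZ} already yields
\[
\left|\frac{S^N(t)}{N}-x_t\right|\le\left(\left|\frac{S^N(0)}{N}-x_0\right|+C_0\ep\right)e^{C_1 t},
\]
and the analogous inequalities for $I^N(t)/N$ and $R^N(t)/N$, uniformly in $t\in[0,t_0]$. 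These are precisely the bounds from which the theorem was deduced by the extra step of passing from the denominator $N$ to the denominator $T^N(t)$ at the cost of an additive $\ep$. For the corollary we simply omit that last step.

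Concretely, I would proceed as follows. First, fix $\ep>0$ and $t_0<\infty$. By Lemma~\ref{Stop} we have $P(\tau_N^\ep>t_0)\to 1$, so on this event the truncated process agrees with the original process by Proposition~1, and the Dynkin--martingale analysis of the proof of Theorem~1 applies verbatim to $f_1,f_2,f_3$. Combining the three resulting bounds (\ref{ODE1})--(\ref{ODE3}) with the Lipschitz property of $F_1,F_2,F_3$ on $\RR^+\times[0,3]^3$, the same ODE-stability step gives the three displayed inequalities above on $A_N$ with $P(A_N)\to 1$.

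Second, I would take the limit $N\to\infty$. Using the hypothesis that $(S^N(0)/N,I^N(0)/N,R^N(0)/N)\to(x_0,y_0,z_0)$, the right-hand side of each inequality tends to $C_0\ep\cdot e^{C_1 t_0}$, and therefore
\[
\sup_{0\le t\le t_0}\left\|\xi^N_t-\xi_t\right\|\;\xrightarrow{\ P\ }\;0
\]
as $N\to\infty$, where the outer probabilistic limit is taken first and then $\ep\downarrow 0$. Uniform convergence in probability to a deterministic continuous path implies weak convergence in the Skorohod sense on $[0,t_0]$, which is the claim.

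I do not expect a genuine obstacle: the entire work has been done inside the proof of Theorem~1, and the corollary merely refrains from performing the last algebraic manipulation. The only point worth being careful about is cosmetic, namely keeping track of the fact that the intermediate bounds are already uniform in $t\in[0,t_0]$ (since the Gronwall step produces a supremum estimate), which is what legitimizes the passage from pointwise-in-$t$ convergence to weak convergence of processes.
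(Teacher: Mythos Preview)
Your proposal is correct and matches the paper's approach exactly: the paper offers no separate proof of the corollary beyond the remark ``From the calculations above, one immediately has the following corollary,'' and you have simply spelled out that those calculations---namely the Gronwall bounds on $|S^N(t)/N-x_t|$, $|I^N(t)/N-y_t|$, $|R^N(t)/N-z_t|$ obtained on the event $A_N$---already yield the uniform convergence of $\xi^N_t$ to $\xi_t$ without the final passage from denominator $N$ to $T^N(t)$. There is nothing to add.
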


\section{A Central Limit Theorem} 
In this section, our goal is to prove a central limit theorem showing that $\xi^N_t$ minus its drift part converges weakly to a diffusion process after proper scaling. We will begin with several notions: for any $(x,y,z)\in [(\ZZ^+\cup \{0\})/N]^3$ and any $t\ge 0$, let $\lambda^{(N)}_t(x,y,z)$ be the total transition rate at time $t$ where the configuration of $\xi^N_t$ equals $(x,y,z)$. By definition it is straightforward to see that 
\beq
\label{TrRate}
\lambda^{(N)}_t(x,y,z)=2N\nu(x+y+z)+\frac{N\beta(t)xy}{x+y+z}+N\gamma y.
\eeq
Then, let $\mu^{(N)}_t(x,y,z)$ be the outcome distribution after a transition at time $t$ given $\xi^N_{t-}=(x,y,z)$. One can easily show that 
\beq
\label{TrDist}
\begin{aligned}
\mu^{(N)}_t(x,y,z)&=\frac{N\nu(x+y+z)}{\lambda^{(N)}_t(x,y,z)}\delta\left(x+N^{-1},y,z\right)+\frac{N\nu x}{\lambda^{(N)}_t(x,y,z)}\delta\left(x-N^{-1},y,z\right)\\
&+\frac{N\nu y}{\lambda^{(N)}_t(x,y,z)}\delta\left(x,y-N^{-1},z\right)+\frac{N\nu z}{\lambda^{(N)}_t(x,y,z)}\delta\left(x,y,z-N^{-1}\right)\\
&+\frac{N\beta(t)xy}{(x+y+z)\lambda^{(N)}_t(x,y,z)}\left(x-N^{-1},y+N^{-1},z\right)\\
&+\frac{N\gamma}{\lambda^{(N)}_t(x,y,z)}\left(x,y-N^{-1},z+N^{-1}\right).
\end{aligned}
\eeq
Then for all $\vec x\in  [(\ZZ^+\cup \{0\})/N]^3$, we define
\beq
\label{InfGen}
F_N(\vec x,t)=\lambda^{(N)}_t(\vec x) \int (\vec z-\vec x)d\mu^{(N)}_t(\vec x, d\vec z)
\eeq
which, by definition, can be written explicitly as 
$$
F_N(\vec x,t)=F(\vec x,t)=\left(
\begin{array} {c}
\nu(y+z)- \frac{\beta(t)xy}{(x+y+z)} \\
\frac{\beta(t)xy}{(x+y+z)}-(\nu+\gamma) y  \\
\gamma y- \nu z 
\end{array}
\right)^T
$$
which is independent of $N$. Similarly, for any $\vec x\in  [(\ZZ^+\cup \{0\})/N]^3$ and $i,j\in \{1,2,3\}$, define 
\beq
\label{InfCov}
g^{(N)}_{i,j}(\vec x, t)=\lambda^{(N)}_t(\vec x) \int (z_i-x_i)(z_j-x_j)d\mu^{(N)}_t(\vec x, d\vec z)
\eeq
and
$$
G^{(N)}(\vec x, t)=\left( g^{(N)}_{i,j}(\vec x, t)\right)_{3\times 3}
$$
which is the infinitesimal covariance matrix of the system. It is easy to see that the matrix $G^{(N)}(\vec x, t)$ can be written explicitly as
$$
G^{(N)}(\vec x, t)=\left(
\begin{array} {ccc}
\frac{\nu(2x+y+z)}{N}+\frac{\beta(t)xy}{N(x+y+z)} & -\frac{\beta(t)xy}{N(x+y+z)}                                          &0\\ 
-\frac{\beta(t)xy}{N(x+y+z)}                                   &\frac{\beta(t)xy}{N(x+y+z)}+\frac{(\nu+\gamma) y}{N}   &-\frac{\gamma y}{N}\\
0                                                                           &-\frac{\gamma y}{N}                                                         &\frac{\mu z+\gamma y}{N}
\end{array}
\right).
$$
Note that for any $N$, $NG^{(N)}(\vec x, t)\equiv G(\vec x, t)=\left( g_{i,j}(\vec x, t)\right)_{3\times 3}$ where 
$$
G(\vec x, t)=\left(
\begin{array} {ccc}
\nu(2x+y+z)+\frac{\beta(t)xy}{(x+y+z)} & -\frac{\beta(t)xy}{(x+y+z)}                                          &0\\ 
-\frac{\beta(t)xy}{(x+y+z)}                                   &\frac{\beta(t)xy}{(x+y+z)}+(\nu+\gamma) y   &-\gamma y\\
0                                                                           &-\gamma y                                                       &\mu z+\gamma y
\end{array}
\right).
$$
The following result shows that the process minus the drift part converges weakly to a diffusion after proper scaling. 
\begin{theorem}
\label{ConDif}
Define the stochastic process
$$
W_N(t)=\sqrt{N}\left(\xi^N_t-\xi^N_0-\int_0^t F(\xi^N_s,s)ds\right).
$$
Then as $N\rightarrow\infty$, $W_N(t)$ converges to the diffusion
process $W(t)$ with the following characteristic function: for all $\vec\theta=(\theta_1,\theta_2,\theta_3)$
$$
\phi(t,\vec\theta)=E(\exp\left[i\vec\theta \cdot W(t)\right])=\exp\left[ -\frac{1}{2}\sum_{i,j}\theta_i\theta_j\int_0^t g_{i,j}(\xi_s,s)ds\right].
$$ 
\end{theorem}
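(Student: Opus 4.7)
The plan is to recognize $W_N(t)$ as the rescaling, by $\sqrt{N}$, of a martingale, compute its predictable quadratic covariation matrix, and apply a martingale central limit theorem (such as Rebolledo's theorem, or Theorem 7.1.4 in Ethier--Kurtz), using Theorem 1 to pass from $\xi^N_s$ to $\xi_s$ inside the integral that gives the limiting covariance.

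First I would observe that, by construction, $F(\vec x, t) = F_N(\vec x, t)$ is precisely the infinitesimal drift of $\xi^N_t$ at state $\vec x$ and time $t$. The inhomogeneous Dynkin formula then implies that
\[
M_N(t) \;=\; \xi^N_t - \xi^N_0 - \int_0^t F(\xi^N_s, s)\,ds
\]
is a mean-zero $\RR^3$-valued martingale, and hence so is $W_N(t)=\sqrt{N}\,M_N(t)$. Using the expression (\ref{InfCov}) together with the identity $NG^{(N)}(\vec x, s) = G(\vec x, s)$ noted in the excerpt, the predictable quadratic covariation of $W_N$ is
\[
\langle W_N\rangle_{ij}(t) \;=\; N\int_0^t g^{(N)}_{i,j}(\xi^N_s, s)\,ds \;=\; \int_0^t g_{i,j}(\xi^N_s, s)\,ds.
\]

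Second I would check the two hypotheses of the martingale CLT. The Lindeberg condition is immediate: every jump of $\xi^N$ has magnitude $1/N$, so every jump of $W_N$ has magnitude $1/\sqrt{N}$, and for any fixed $\ep>0$ and $N$ large enough the sum $\sum_{s\le t}|\Delta W_N(s)|^2 \mathbbm{1}_{|\Delta W_N(s)|>\ep}$ is identically $0$. For the quadratic variation, by the Corollary to Theorem 1 together with Lemma \ref{Stop}, there is a high-probability event on which $(\xi^N_s)_{s\le t_0}$ stays in a fixed compact neighborhood $K$ of the deterministic trajectory $(\xi_s)_{s\le t_0}$ and converges to it uniformly. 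On $K\times [0,t_0]$ each $g_{i,j}(\cdot,\cdot)$ is continuous and uniformly bounded (and Lipschitz in the spatial variable), so by continuous mapping
\[
\int_0^t g_{i,j}(\xi^N_s, s)\,ds \;\xrightarrow{\;P\;}\; \int_0^t g_{i,j}(\xi_s, s)\,ds \;=:\; C_{ij}(t)
\]
for every $t\in [0,t_0]$. The martingale CLT then yields $W_N \Rightarrow W$ in the Skorokhod topology on $D([0,t_0];\RR^3)$, where $W$ is a continuous, mean-zero Gaussian martingale with deterministic predictable covariation $\langle W\rangle_{ij}(t)=C_{ij}(t)$. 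Since the one-dimensional marginals of such a process are centered Gaussian with covariance matrix $C(t)$, the characteristic function of $W(t)$ is exactly
\[
E\!\left[\exp\!\left(i\vec\theta\cdot W(t)\right)\right] \;=\; \exp\!\left[-\tfrac{1}{2}\sum_{i,j}\theta_i\theta_j\int_0^t g_{i,j}(\xi_s,s)\,ds\right],
\]
matching the statement.

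The step I expect to require the most care is the convergence of the quadratic variation, because the functions $g_{i,j}$ contain the ratio $xy/(x+y+z)$, which is not globally Lipschitz near the origin and not even defined at $0$. The way to handle this is to combine Lemma \ref{Stop} (which keeps $T^N(s)/N$ close to $1$, so $x+y+z$ is bounded away from $0$) with the Corollary to Theorem 1 (which keeps $\xi^N_s$ in a fixed compact neighborhood $K$ of $\xi_s$ where $g_{i,j}$ is Lipschitz and bounded); on the intersection of these high-probability events the convergence of the integrals reduces to a routine continuous-mapping argument, and outside of them the error contribution is negligible because the events have probability $1-o(1)$. Once the quadratic variation convergence is in hand, everything else is standard martingale-CLT machinery.
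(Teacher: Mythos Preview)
Your proposal is correct and takes a genuinely different route from the paper. The paper works directly with characteristic functions, following Kurtz (1971): it first passes to the truncated process $\hat\xi^N$ (so that all rates are bounded), applies It\^o's formula to $h(\hat W_N(s))=\exp[i\vec\theta\cdot\hat W_N(s)]$ regarded as a function of the six-dimensional semimartingale $(\hat\xi^N_s,\hat W_N(s))$, takes expectations, and obtains an approximate integral equation for $\phi_N(t,\vec\theta)$ whose limit is the ODE satisfied by $\phi(t,\vec\theta)$. Your argument instead packages all of this into a single application of the martingale CLT (Rebolledo / Ethier--Kurtz), verifying the Lindeberg condition from the $1/\sqrt N$ jump size and the convergence of the predictable bracket from Corollary~1. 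What you gain is brevity and transparency: once the bracket is identified as $\int_0^t G(\xi^N_s,s)\,ds$, the rest is off-the-shelf. What the paper gains is self-containedness in the spirit of \cite{KZ71}, and an explicit mechanism (the truncation) for ensuring that $M_N$ is an honest $L^2$ martingale rather than merely a local one.

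That last point is the one place where your write-up should be tightened. You invoke Dynkin's formula on the untruncated process, but the rates of $\xi^N$ are unbounded (the total population is a critical branching process), so $M_N$ is a priori only a local martingale and the bracket computation needs justification. The paper's cure is to replace $\xi^N$ by $\hat\xi^N$, show $\hat W_N-W_N\to 0$ in probability via Lemma~\ref{Stop}, and run the whole argument on $\hat W_N$, whose rates are bounded by $MN$. In your framework the same cure works: either run the martingale CLT on $\hat W_N$ and transfer the conclusion, or stop $W_N$ at $\tau_N^\ep$ and note that $P(\tau_N^\ep\le t_0)\to 0$. You already gesture at this in your final paragraph when you invoke Lemma~\ref{Stop}; making the localization explicit would close the only real gap.
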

[Remark]: The theorem above implies that for all $t\ge 0$ $W_N(t)$ converges weakly to the 3 dimensional normal distribution with mean 0 and characteristic function given by $\phi(t,\vec\theta)$. This explains why we call this section a central limit theorem. 

\begin{proof}
In order to bound the drift and transition rates, we again need to consider the truncated process. Let 
$$
\hat\xi^N_t=(\hat S^N(t),\hat I^N(t), \hat R^N(t))/N\in [(\ZZ^+\cup \{0\})/N]^3.
$$
Similarly, we can define 
\begin{align*}
\hat\lambda^{(N)}_t(x,y,z)&=N\nu[(x+y+z)\wedge 2]+N\nu(x\wedge 2+y\wedge 2+z\wedge 2)\\
&+\frac{N\beta(t)(x\wedge 2)y}{x+y+z}+N\gamma (y\wedge 2)
\end{align*}
to be the transition rate and let 
$$
\begin{aligned}
\hat\mu^{(N)}_t(x,y,z)&=\frac{N\nu[(x+y+z)\wedge 2]}{\lambda^{(N)}_t(x,y,z)}\delta\left(x+N^{-1},y,z\right)+\frac{N\nu (x\wedge 2)}{\lambda^{(N)}_t(x,y,z)}\delta\left(x-N^{-1},y,z\right)\\
&+\frac{N\nu (y\wedge 2)}{\lambda^{(N)}_t(x,y,z)}\delta\left(x,y-N^{-1},z\right)+\frac{N\nu (z\wedge 2)}{\lambda^{(N)}_t(x,y,z)}\delta\left(x,y,z-N^{-1}\right)\\
&+\frac{N\beta(t)(x\wedge 2)y}{(x+y+z)\lambda^{(N)}_t(x,y,z)}\left(x-N^{-1},y+N^{-1},z\right)\\
&+\frac{N(y\wedge 2)\gamma}{\lambda^{(N)}_t(x,y,z)}\left(x,y-N^{-1},z+N^{-1}\right).
\end{aligned}
$$
Then we can define the drift 
\beq
\label{InfGen}
\hat F_N(\vec x,t)=\hat F(\vec x,t)=\hat\lambda^{(N)}_t(\vec x) \int (\vec z-\vec x)d\hat\mu^{(N)}_t(\vec x, d\vec z),
\eeq
where it is easy to check that 
$$
\hat F(\vec x,t)=\left(
\begin{array} {c}
\nu[(x+y+z)\wedge 2]-\nu(x\wedge 2)- \frac{\beta(t)(x\wedge 2)y}{(x+y+z)} \\
\frac{\beta(t)(x\wedge 2)y}{(x+y+z)}-(\nu+\gamma) (y\wedge 2)  \\
\gamma (y\wedge 2)- \nu (z\wedge 2)
\end{array}
\right)^T. 
$$ 
and 
$$
\hat W_N(t)=\sqrt{N}\left(\hat\xi^N_t-\hat \xi^N_0-\int_0^t F(\hat \xi_s,s)ds\right).
$$

From the discussion in Section 3,  it is easy to see that for any $t_0<\infty$, under the event $A_N$, we have that $\hat W_N(t)\equiv W_N(t)$ for all $t\le t_0$. According to Lemma \ref{Stop}, it is easy to see that 
\beq
\label{DRTr}
\hat W_N(t)-W_N(t)\rightarrow 0
\eeq
in probability as $N\rightarrow\infty$, which implies that in
order to prove Theorem 2, it suffices to show that $\hat W_N(t)\Rightarrow W(t)$ as $N\rightarrow\infty$. Moreover, for any $\vec x\in  [(\ZZ^+\cup \{0\})/N]^3$ and $i,j\in \{1,2,3\}$, we can similarly define 
\beq
\label{InfCov}
\hat g^{(N)}_{i,j}(\vec x, t)=\hat\lambda^{(N)}_t(\vec x) \int (z_i-x_i)(z_j-x_j)d\hat\mu^{(N)}_t(\vec x, d\vec z)
\eeq
and
$$
\hat G^{(N)}(\vec x, t)=\left(\hat g^{(N)}_{i,j}(\vec x, t)\right)_{3\times 3}
$$
which is the infinitesimal covariance matrix of the system. Similarly to before, the matrix $\hat G^{(N)}(\vec x, t)$ can be written explicitly as
$$
\hat G^{(N)}(\vec x, t)=\left(
\begin{array} {ccc}
\frac{\nu[(x+y+z)\wedge 2+x\wedge 2]}{N}+\frac{\beta(t)(x\wedge 2)y}{N(x+y+z)} & -\frac{\beta(t)(x\wedge 2)y}{N(x+y+z)}                                          &0\\ 
-\frac{\beta(t)(x\wedge 2)y}{N(x+y+z)}                                   &\frac{\beta(t)(x\wedge 2)y}{N(x+y+z)}+\frac{(\nu+\gamma) (y\wedge 2)}{N}   &-\frac{\gamma (y\wedge 2)}{N}\\
0                                                                           &-\frac{\gamma (y\wedge 2)}{N}                                                         &\frac{\nu (z\wedge 2)+\gamma (y\wedge 2)}{N}
\end{array}
\right).
$$
Note that for any $N$, $N\hat G^{(N)}(\vec x, t)\equiv \hat G(\vec x, t)=\left( \hat g_{i,j}(\vec x, t)\right)_{3\times 3}$ is equal to
$$
\left(
\begin{array} {ccc}
\nu[(x+y+z)\wedge 2+x\wedge 2]+\frac{\beta(t)(x\wedge 2)y}{(x+y+z)} & -\frac{\beta(t)(x\wedge 2)y}{(x+y+z)}                                          &0\\ 
-\frac{\beta(t)(x\wedge 2)y}{(x+y+z)}                                   &\frac{\beta(t)(x\wedge 2)y}{(x+y+z)}+(\nu+\gamma) (y\wedge 2)   &-\gamma (y\wedge 2)\\
0                                                                           &-\gamma (y\wedge 2)                                                       &\nu (z\wedge 2)+\gamma (y\wedge 2)
\end{array}
\right).
$$
Recall the definition of the mean-field ODE equation (\ref{MnODE}),
its solution $\xi_t=(\xi_t^{(1)},\xi_t^{(2)},\xi_t^{(3)})$ satisfies
that $\xi_t\in [0,1]^3$ and that
$\xi_t^{(1)}+\xi_t^{(2)}+\xi_t^{(3)}\equiv 1$. We have $ \hat G(\xi_t,
t)\equiv G(\xi_t, t)$ for all $t\ge0$. At this point, we have shown
that to show Theorem 2 it suffices to prove the following lemma that
gives the parallel result for the truncated process $\hat \xi_t$. 
\begin{lemma}
\label{LCDH}
As $N\rightarrow\infty$, $\hat W_N(t)$ converges to the diffusion $\hat W(t)= W(t)$. 
\end{lemma}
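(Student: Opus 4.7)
The plan is to reduce Lemma \ref{LCDH} to a vector martingale central limit theorem. Applying the inhomogeneous Dynkin formula coordinate-wise to the truncated process $\hat\xi^N$ with the test functions $\vec x \mapsto x_i$ (which are bounded on the truncated state space), one obtains that
$$M_N(t) = \sqrt{N}\Bigl(\hat\xi^N_t - \hat\xi^N_0 - \int_0^t \hat F(\hat\xi^N_s, s)\,ds\Bigr)$$
is a square-integrable $\RR^3$-valued martingale. The difference $\hat W_N(t) - M_N(t) = \sqrt{N}\int_0^t \bigl(F(\hat\xi^N_s, s) - \hat F(\hat\xi^N_s, s)\bigr)\,ds$ vanishes whenever each coordinate of $\hat\xi^N_s$ is at most $2$, i.e.\ whenever the $\wedge 2N$ cutoffs are inactive. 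On the event $A_N$ from Section 3, every coordinate of $\hat\xi^N$ is bounded by $1+\ep < 2$, so $\hat W_N \equiv M_N$ on $A_N$; since $P(A_N) \to 1$, it suffices to prove $M_N \Rightarrow W$ in $D([0,t_0];\RR^3)$.

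To apply the martingale CLT, I would first compute the predictable covariation using the identity $N\hat G^{(N)} \equiv \hat G$ highlighted in the text:
$$\langle M_N^i, M_N^j\rangle_t = \int_0^t \hat g_{i,j}(\hat\xi^N_s, s)\,ds.$$
By the corollary to Theorem 1, $\hat\xi^N_s \to \xi_s$ in probability uniformly on $[0,t_0]$. Because the ODE trajectory lies in the compact simplex on which $\hat G$ coincides with the jointly continuous function $G$, and $\hat G$ is bounded on the truncated state space, the continuous mapping theorem together with bounded convergence yields
$$\int_0^t \hat g_{i,j}(\hat\xi^N_s, s)\,ds \ \longrightarrow\ \int_0^t g_{i,j}(\xi_s, s)\,ds$$
in probability for each $t$ and each pair $(i,j)$. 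The jumps of $M_N$ are bounded in norm by $2/\sqrt{N}$, which simultaneously supplies the Lindeberg-type condition and tightness of $M_N$ in the Skorokhod space. A standard vector martingale CLT (a time-inhomogeneous version of the argument of \cite{KZ71}) then identifies the limit $W$ as the continuous Gaussian martingale with $\langle W\rangle_t = \int_0^t G(\xi_s, s)\,ds$, whose marginal at time $t$ is centered Gaussian with covariance $\int_0^t G(\xi_s, s)\,ds$ and characteristic function exactly $\phi(t,\vec\theta)$.

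The main obstacle will be carefully assembling the martingale-CLT hypotheses in this inhomogeneous setting: one must confirm uniform integrability of $|M_N(t)|^2$, verify that convergence of the compensators is strong enough in the Skorokhod sense (pointwise in $t$ in probability suffices here because the limit $t \mapsto \int_0^t G(\xi_s, s)\,ds$ is continuous), and check the Lindeberg condition uniformly on $[0,t_0]$. All three follow from the boundedness of $\hat G$ on the truncated state space combined with the $O(1/\sqrt{N})$ bound on jump sizes once one sets up the accounting correctly; after these technicalities are in hand, identification of the limit via the stated characteristic function is immediate.
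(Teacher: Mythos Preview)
Your proposal is correct but takes a genuinely different route from the paper. The paper follows the original Kurtz~(1971) line: it introduces the six-dimensional semimartingale $\zeta^N_s = (\hat\xi^N_s,\hat W_N(s))$, applies It\^o's formula for finite-variation jump processes to $f(\zeta^N) = \exp[i\vec\theta\cdot\hat W_N]$, takes expectations, and uses the Taylor remainder $\psi(u)=(e^{iu}-1-iu+u^2/2)/u^2$ together with the $N^{-1}$ jump size to kill the error term, arriving at an approximate integral equation for $\phi_N(t,\vec\theta)$ which is then solved via an ODE/Gronwall argument to obtain $\phi_N\to\phi$; tightness is handled separately by the $L^2$ bound~(\ref{BML2}). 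Your approach instead identifies $M_N$ as the compensated Dynkin martingale, checks predictable-covariation convergence and the asymptotic negligibility of jumps, and invokes a vector martingale functional CLT as a black box. Both are sound; the paper's argument is more self-contained and faithful to the cited reference, while yours is shorter and more modular, trading the explicit It\^o/characteristic-function computation for an appeal to a standard theorem (e.g.\ Rebolledo or Ethier--Kurtz), which already absorbs the time-inhomogeneity without extra work. The only cosmetic issue is a sign: $\hat W_N - M_N = \sqrt{N}\int_0^t(\hat F-F)\,ds$, not $F-\hat F$, though this is immaterial since the integrand vanishes on $A_N$ anyway.
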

\begin{proof} Here we imitate the proof of Theorem (3.1) in \cite{KZ71}. First, the tightness of the sequence $\hat W_N(t)$ can be easily verified by inequality (\ref{BML2}) controlling the $L^2$ norm of the Dynkin's Martingale and the fact that $P(\tau_N^\ep\le t)\to 0$. For any $t\ge 0$ and any $\vec\theta=(\theta_1,\theta_2,\theta_3)$, define the characteristic function 
\beq
\label{CF1}
\phi_N(t,\vec\theta)=E\left(\exp\left[i\vec\theta\cdot W_N(t)\right] \right). 
\eeq
Here and in the rest of this section, ``$\cdot$" stands for dot product. Noting that the diffusion process is Gaussian and has independent increments, according to the proof in \cite{KZ71}, to prove this lemma, it suffices to show that 
\beq
\label{CCF}
\lim_{N\rightarrow\infty} \phi_N(t,\vec\theta)=\phi(t,\vec\theta)
\eeq
for all $t$ and $\theta$. To prove equation (\ref{CCF}), we introduce the Markov process in $(\RR^+\cup\{0\})^6$
$$
\zeta^{N}_s=\{\zeta^{N}_s(i)\}_{i=1}^6=\left(\hat\xi^{N}_s, \hat W_N(s)\right). 
$$
Recall the construction of the truncated process in Table 2: each jump in $\hat\xi^{N}_s$ corresponds to a jump in a finite family of Poisson processes. So for any $t\ge 0$, $\zeta^{N}_s$ is a bounded semimartingale on $[0,t]$ with finite variation. Moreover, it is easy to see that we have the decomposition: 
$$
\zeta^{N}_s=(\zeta^{N}_s)^c+(\zeta^{N}_s)^d
$$
where 
$$
(\zeta^{N}_s)^c=\left(\vec 0, -\sqrt{N}\int_0^s \hat F_N(\hat\xi^{N}_r,r)dr\right)
$$
is a continuous semimartingale and 
$$
(\zeta^{N}_s)^d=\left(\hat\xi^{N}_s,\sqrt{N}(\hat\xi^{N}_s-\hat\xi^{N}_0)\right)
$$
is a pure jump process. Thus for any twice continuously differentiable bounded function $f: \RR^6\rightarrow \RR$ and any $t\ge 0$, by Ito's formula, see Theorem 3.9.1 of \cite{Bicht} for example, we have
\beq
\label{ItoF1}
\begin{aligned}
f(\zeta^{N}_t)=&f(\zeta^{N}_0)+\int_0^t \nabla f(\zeta^{N}_{s-}) \cdot d\zeta^{N}_{s}\\
&+\sum_{s\le t} \left[\Delta f(\zeta^{N}_{s})-\nabla f(\zeta^{N}_{s-})\cdot \Delta\zeta^{N}_{s} \right].
\end{aligned}
\eeq
Then according to the decomposition above, and the fact that $\zeta^{N}_{s}$ with probability one is of finite variation with at most finite jumps, Remark  3.7.27 (iv) in \cite{Pro} guarantees that 
$$
\int_0^t \nabla f(\zeta^{N}_{s-}) \cdot d\zeta^{N}_{s}
$$
is an ordinary Lebesgue-Stieltjes integral pathwisely. This allow us to decompose the (finite) jumps in $\zeta^{N}_{s}: s\in[0,t]$, which cancels with last term in equation (\ref{ItoF1}). So we have: 
\beq
\label{ItoF2}
\begin{aligned}
f(\zeta^{N}_t)&=f(\zeta^{N}_0)+\int_0^t \nabla f(\zeta^{N}_{s-})\cdot d(\zeta^{N}_{s})^c+\sum_{s\le t} \Delta f(\zeta^{N}_{s})\\
&=f(\zeta^{N}_0)+\int_0^t \nabla f(\zeta^{N}_{s})\cdot[ \vec 0, -\sqrt{N}\hat F(\xi^{N}_{s},s)]ds+\sum_{s\le t} \Delta f(\zeta^{N}_{s})
\end{aligned}
\eeq
Let $f(\vec x)=h\left (x_4,x_5,x_6\right)=\exp\left[i\vec\theta\cdot (x_4,x_5,x_6)\right] $. Then we have
\beq
h\left(\hat W_N(t)\right)=1-\sqrt{N}\int_0^t \nabla h\left(\hat W_N(s)\right)\cdot \hat F(\xi^{N}_{s},s)ds+\sum_{s\le t} \Delta h\left(\hat W_N(s)\right).
\eeq
Taking expectation on both sides, and noting that all the functions $h,\hat F$ and $\hat\lambda_N$ are Lipschitz, we have that
\begin{align*}
\phi_N(t,\vec\theta)=&1-\sqrt{N} \int_0^t E\left(\nabla h \left(\hat W_N(s)\right)\cdot \hat\lambda^{(N)}_s(\hat\xi^{N}_s)\int (\vec z-\hat\xi^{N}_s) \hat\mu^{(N)}_s(\hat\xi^{N}_s,dz)\right)ds\\
&+  \int_0^t E\left(\hat\lambda^{(N)}_s(\hat\xi^{N}_s)\int \left[h\left(\hat W_N(s)+\sqrt{N}(z-\hat\xi^{N}_s)\right)- h\left(\hat W_N(s)\right)\right] \hat\mu^{(N)}_s(\hat\xi^{N}_s,dz)\right)ds
\end{align*}
which implies that  
\begin{align*}
\phi_N(t,\vec\theta)=1+E\left( \int_0^t \hat\lambda^{(N)}_s(\hat\xi^{N}_s)\int \left[T^1(h,\hat W_N(s),\hat\xi^N_s,z)\right] \hat\mu^{(N)}_s(\hat\xi^{N}_s,dz)\right) 
\end{align*}
where
$$
T^1(h,\hat W_N(s),\hat\xi^N_s,z)=h\left(\hat W_N(s)+\sqrt{N}(z-\hat\xi^{N}_s)\right)-h\left(\hat W_N(s)\right)-\sqrt{N}(z-\hat\xi^{N}_s)\cdot \nabla h \left(\hat W_N(s)\right). 
$$
Note that $\hat G$ and $h$ are both Lipschitz functions and that the function
$$
\psi(u)=\frac{e^{iu}-iu+\frac{u^2}{2}}{u^2}
$$
is Lipschitz on [-1/2,1/2]. Using the same calculation as on page 352 of \cite{KZ71}, we have
\begin{align*}
&\phi_N(t,\vec\theta)-1=-\int_0^tE\left(\frac{1}{2}\sum_{j,k\in\{1,2,3\}}\theta_j\theta_k \hat g_{j,k}(\hat\xi^N_s,s)\exp\left[i\vec\theta\cdot \hat W_N(s)\right]\right)ds\\
&+\int_0^tE\left(\exp\left[i\vec\theta\cdot \hat W_N(s)\right] \lambda^{(N)}_s(\hat\xi^{N}_s)\int \psi\left(\sqrt{N}\vec\theta\cdot(z-\xi^{N}_s)\right)\left[\sqrt{N}\vec\theta\cdot(z-\xi^{N}_s)\right]^2 \hat\mu^{(N)}_s(\hat\xi^{N}_s,dz)\right)ds.
\end{align*}
Denoting the second term on the right side as $K_N(t,\vec\theta)$,
note that the measures $\hat\mu^{(N)}_s(\hat\xi^{N}_s,\cdot)$ always
put mass one on the neighborhood $B_\infty(\hat\xi^{N}_s,N^{-1})$, $\hat G$ is bounded, and that $\psi(u)\rightarrow 0$ as $u\to 0$. It is straightforward to see that for any $t$ and $\vec\theta$, $K_N(t,\vec\theta)\to 0$ as $N\to\infty$. Thus we have
$$
\phi_N(t,\vec\theta)-1=-\int_0^tE\left(\frac{1}{2}\sum_{j,k\in\{1,2,3\}}\theta_j\theta_k \hat g_{j,k}(\hat\xi^N_s,s)\exp\left[i\vec\theta\cdot \hat W_N(s)\right]\right)ds +o(1).
$$
In Corollary 1 we proved that $\hat\xi^N_s$ converges weakly to the (deterministic) solution $\xi_s$. Then for any $\ep$ and $t_0$ under the high probability event 
$$
\hat A_N=\left\{\sup_{s\in [0,t_0]} |\hat\xi^N_s-\xi_s|<\ep\right\}
$$
noting that $\hat g_{j,k}$ are all Lipschitz functions, we have that there are some constants $K,L<\infty$, such that when $N$ is large, for all $t\le t_0$:
\begin{align*}
&\phi_N(t,\vec\theta)-1+\int_0^t\frac{1}{2}\sum_{j,k\in\{1,2,3\}}\theta_j\theta_k \hat g_{j,k}(\xi_s,s)E\left(\exp\left[i\vec\theta\cdot \hat W_N(s)\right]\right)ds \\
&\in \left[-K\ep-LP(\hat A_N^c)-o(1) ,K\ep+LP(\hat A_N^c)+o(1)\right].
\end{align*}
Thus, we have proven that for any $\ep$, when $N$ is large: 
\beq
\label{CD1}
\sup_{t\in [0,t_0]}\left|\phi_N(t,\vec\theta)-1+\int_0^t\frac{1}{2}\sum_{j,k\in\{1,2,3\}}\theta_j\theta_k \hat g_{j,k}(\xi_s,s)\phi_N(s,\vec\theta)ds\right|\le 2K\ep. 
\eeq
Thus, the desired result that $\phi_N(t,\vec\theta)\to
\phi(t,\vec\theta)$ follows directly from the same ODE argument in \cite{KZ71}, which completes the proof of this lemma. 
\end{proof}
Combining the result in Lemma \ref{LCDH} and the fact in equation (\ref{DRTr}) that $\hat W_N(t)-W_N(t)\Rightarrow 0$, the proof of Theorem 2 is complete. 
\end{proof}

\section{Simulation Results}

In this section we present results of some numerical simulations that
illustrate the above theory. The four subgraphs (a)- (d) in Figure 1 show single
realizations of the model for four different initial population sizes,
together with the deterministic solution of the model. In each case we
show the fraction of each type against time. As predicted by the theory,
realizations of the stochastic model fluctuate around the
deterministic solution, with noticeably smaller fluctuations for
larger population sizes.
\begin{figure} \centering 
\subfigure[] {
\includegraphics[width=0.475\columnwidth]{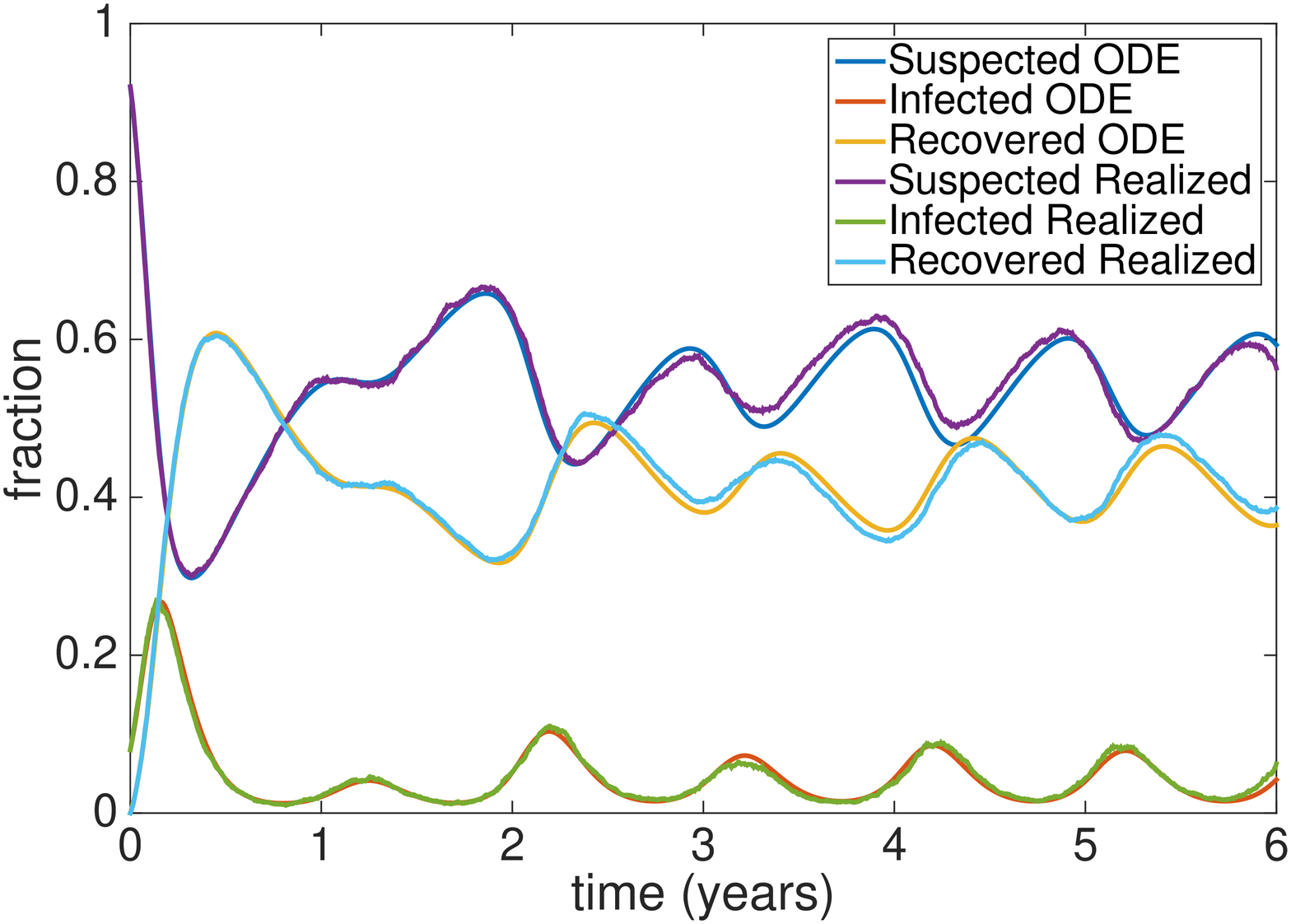} 
} 
\subfigure[] {
\includegraphics[width=0.475\columnwidth]{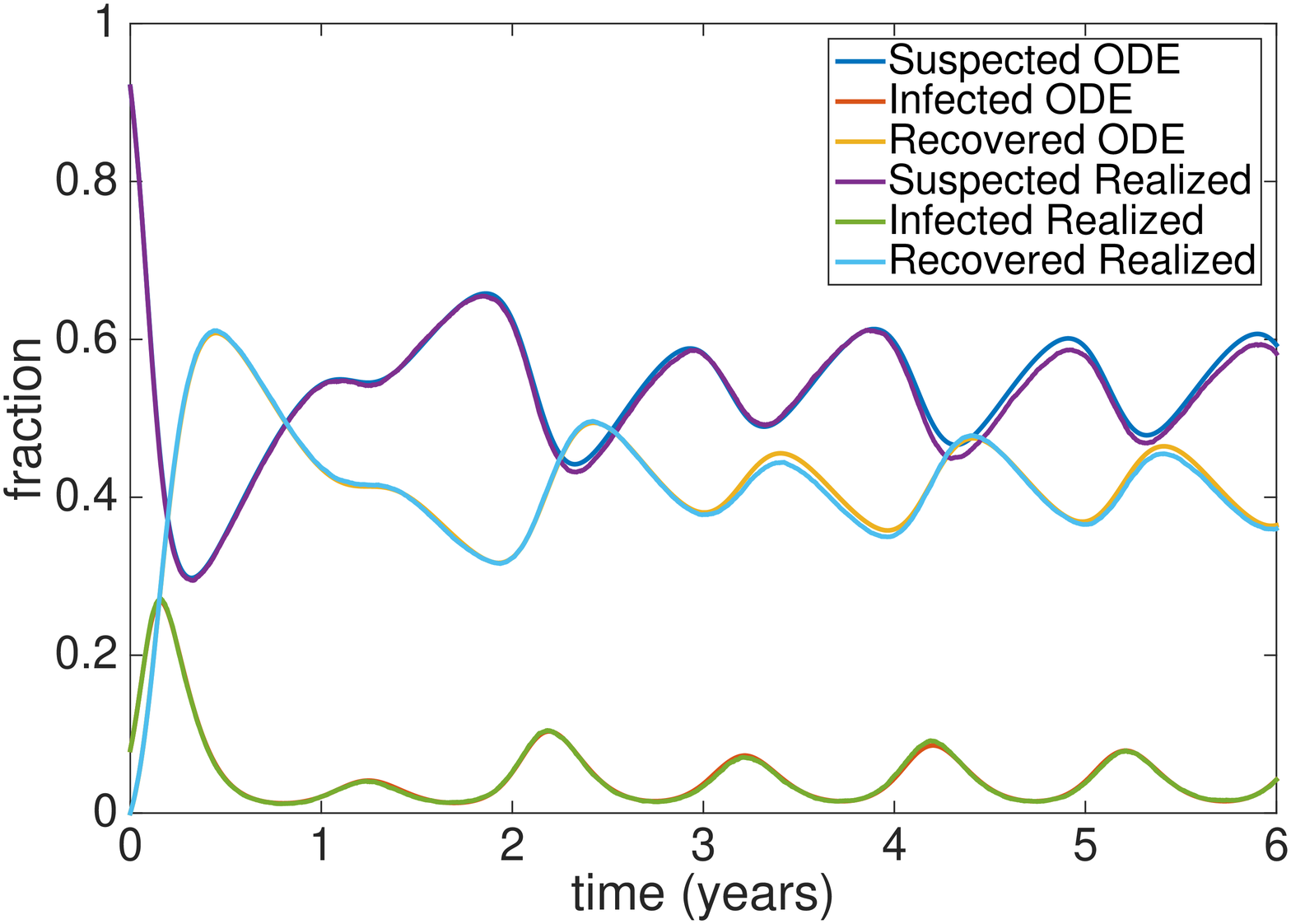} 
} 
\subfigure[] { 
\includegraphics[width=0.475\columnwidth]{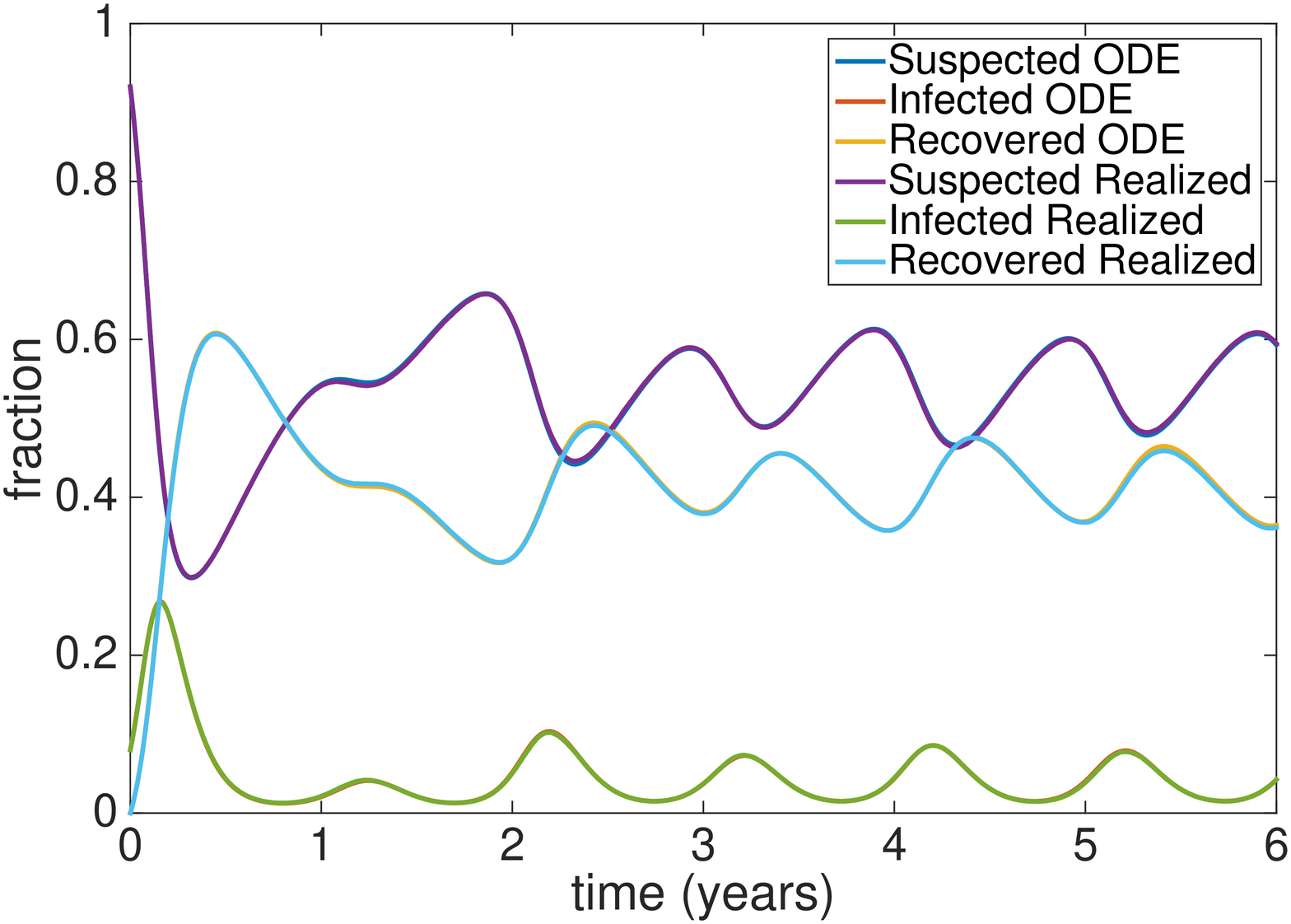} 
} 
\subfigure[] {
\includegraphics[width=0.475\columnwidth]{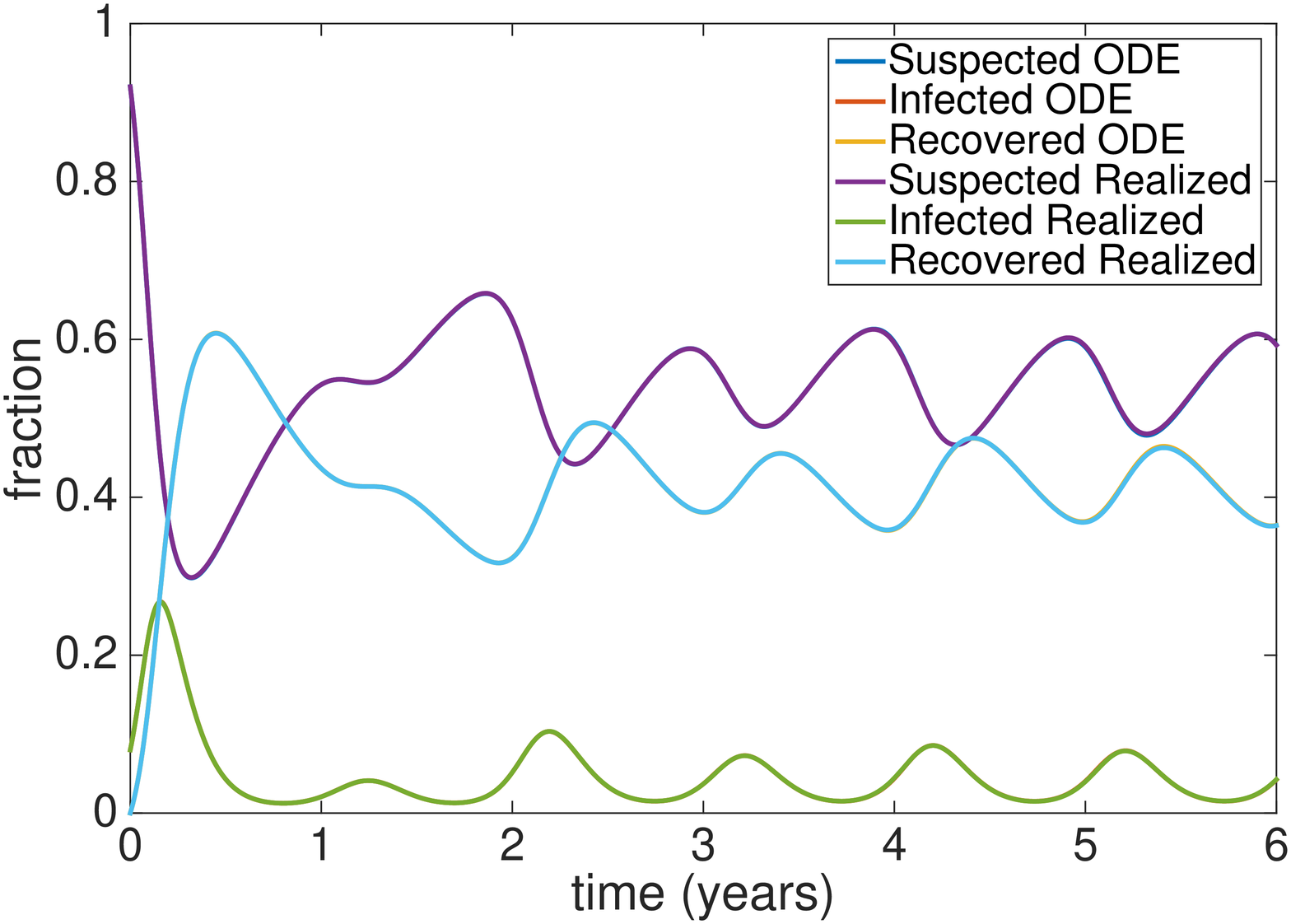} 
} 
\label{fig} 
 \caption{Single realizations of the seasonally forced epidemic
      model for initial population sizes (a) $N=10^4$, (b) $N=10^5$,
      (c) $N=10^6$ and (d) $N=10^7$, together with the solution of the corresponding deterministic model. Each curve shows the susceptible/ infectious/ recovered fraction ({\it i.e.} $S(t)/(S(t)+I(t)+R(t))$, $I(t)/(S(t)+I(t)+R(t))$, and $R(t)/(S(t)+I(t)+R(t))$) against time. Parameter values were chosen as follows: $\beta=20 {\rm\ year}^{-1}$, $\beta_1=0.4$, $\gamma=10 {\rm\ year}^{-1}$ and $\nu=1{\rm\ year}^{-1}$. Parameter values were chosen for illustrative purposes rather than to represent a specific disease in a specific population. In each case, the initial conditions of the system were taken to be $S(0)=0.92N$, $I(0)=0.08N$, with $R(0)=0$.
}
\end{figure}

In order to verify the diffusion-like behavior of $W_N(t)$, 4000
realizations of the model were generated. Figure 2(a) shows the
distribution of the values of the $j$th component of $W_N(1)$
calculated with an initial population size of $N=10^6$. As expected, this
marginal distribution is consistent with a normal
distribution. Furthermore, the mean and standard deviation of this
distribution agree with those predicted by the theory. Figure 2(b)
shows the distribution of the number of infectives population at time $t=1$
seen across the same set of realizations. 

\begin{figure}[H]
    \centering
    \subfigure[] {
    \includegraphics[width=0.475\columnwidth]{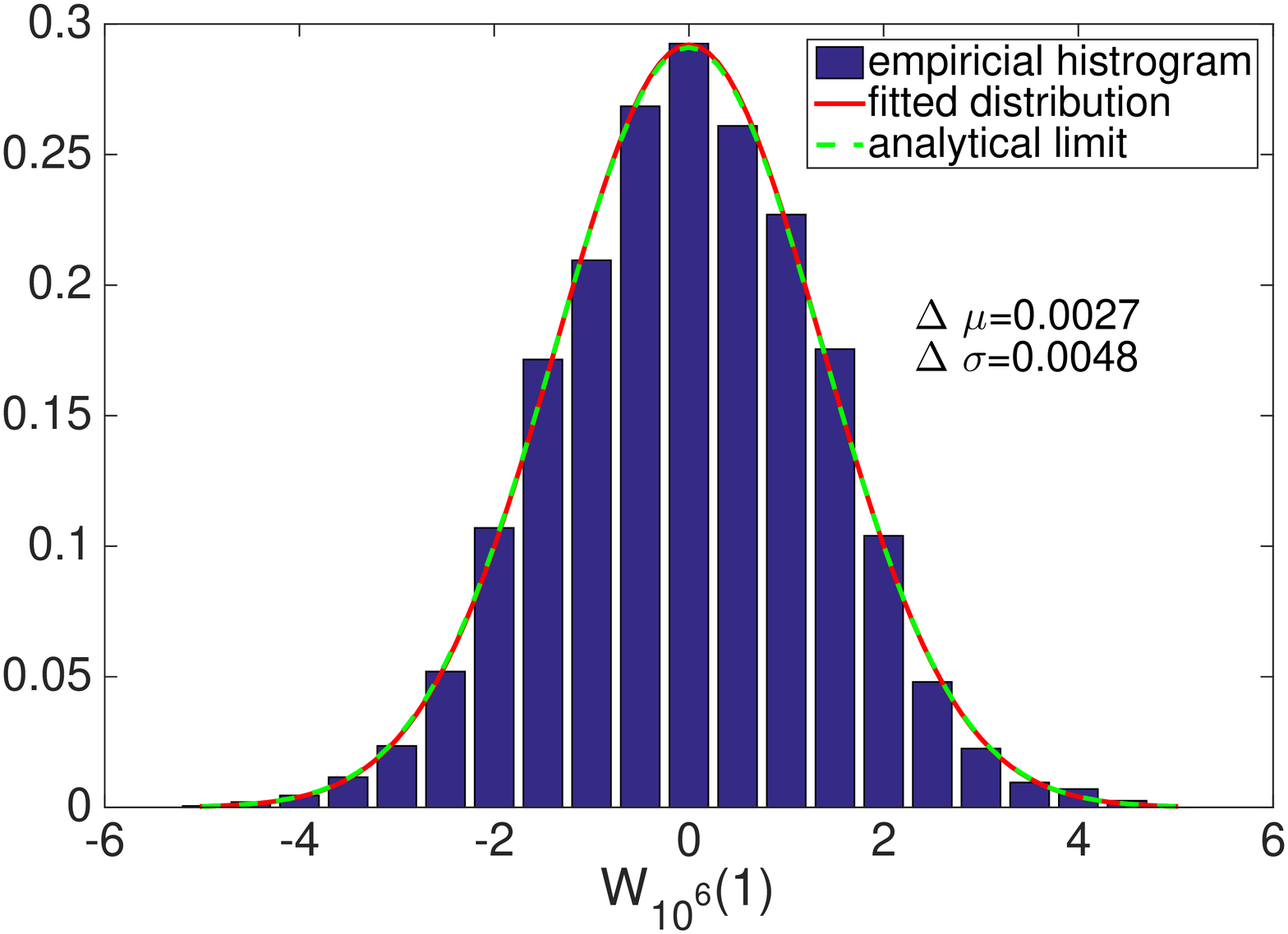} 
    } 
    \subfigure[] {
    \includegraphics[width=0.475\columnwidth]{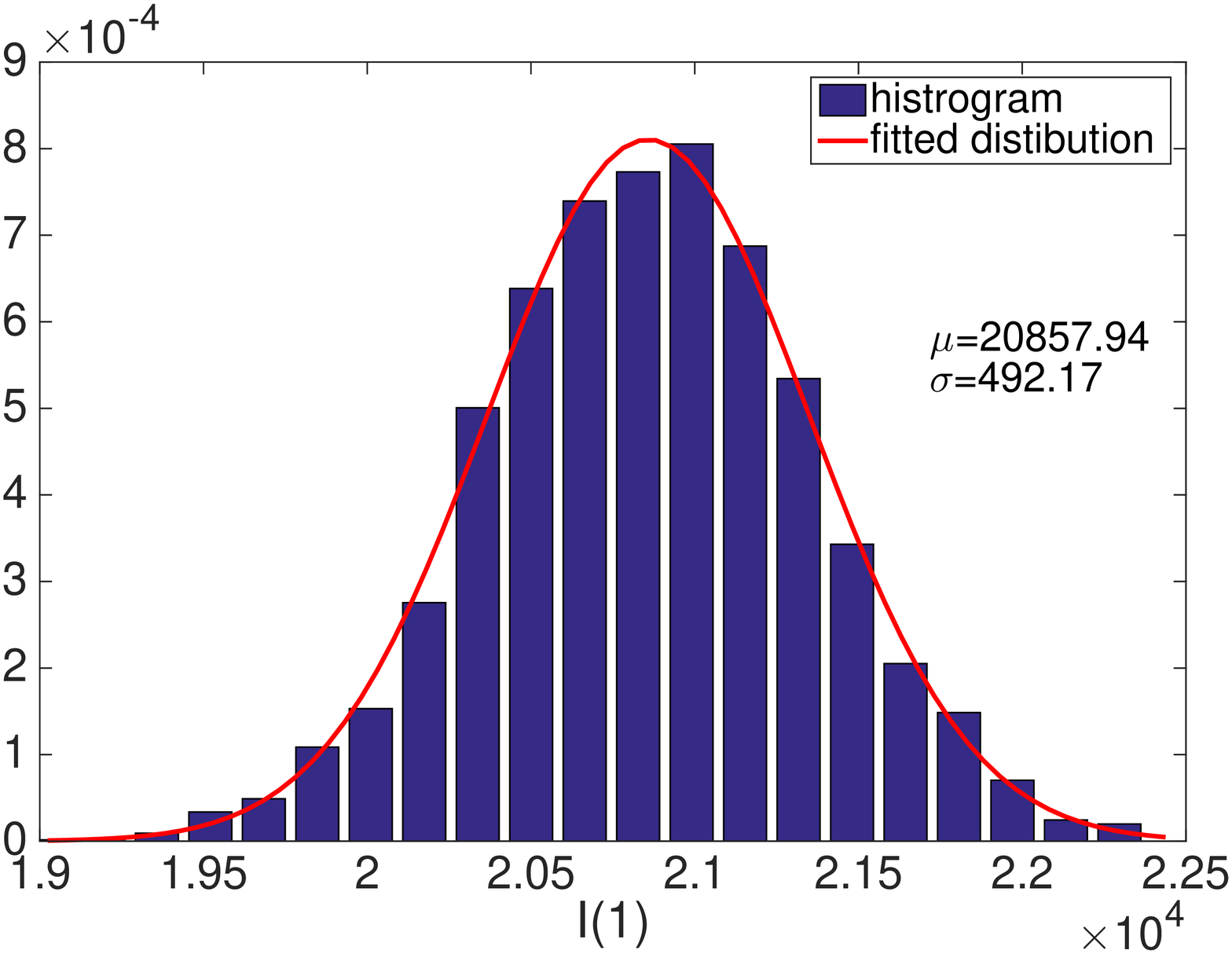} 
    } 
\caption{(a) Distribution of the $2$nd component (the component corresponding to the infective) of $W_{10^6}(1)$ calculated across 4000 realizations of the model (histogram). The solid red line indicates the best-fitting normal distribution fitted to the empirical histogram. The green dashed curve indicates the normal distribution predicted by the theory. (b) Distribution of the number of infectives seen at time $t=1$ across the same 4000 model realizations. Parameter values and initial conditions were chosen as in Figure 1. 
}
\end{figure}

Finally, Figure 3 explores the scaling of the standard deviation of
the difference between the realizations and their drift part with the
initial population size $N$. For each value of $N$, 4000 realizations of
the model were generated and, $\sigma^i_N(1)$, the standard deviation of the 2nd component of 
$$
Z_N(1)=\xi^N_1-\xi^N_0-\int_0^1 F(\xi^N_s,s)ds,
$$
was calculated across the set of realizations. We also calculate $f^i_N(1)$, the mean of fraction of the infectives across the set of realizations. Let 
$$
R^i_N(1)=\sigma^i_N(1)/ f^i_N(1)=O(N^{-1/2})
$$ 
be the ratios between them. The following figure shows, on a log-log scale, this value as a function of $N$. These points fall around a straight line of slope -1/2, consistent with the $1/\sqrt{N}$ scaling predicted by the theory.
\begin{figure}[H]
    \centering
    \includegraphics[width=4.5in]{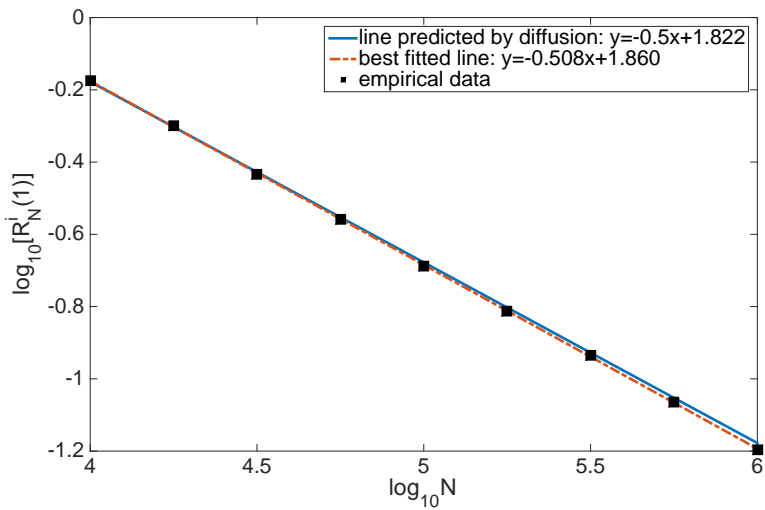}
 \caption{The data points give log-log plot of $R^i_N(1)$, calculated
   from 4000 realizations for each $N$ against the population size
   $N=10^4, 10^{4.25}, 10^{4.5},\cdots, 10^{6}$. The red dashed line
   gives the best fitted straight using those data points. The blue
   solid line is the line predicted by Theorem 1 and 2. Parameter values and initial conditions were chosen as in Figure 1. 
}
\end{figure}

\section*{Acknowledgements}

This work was carried out as part of the Mathematical and Statistical
Ecology program of the Statistical and Applied
Mathematical Sciences Institute (SAMSI), funded under NSF grant DMS-0635449. 
ALL is also supported by grants from the National Institutes of Health
(P01-AI098670) and the National Science Foundation (RTG/DMS-1246991). The authors wish to thank Rick Durrett and Jonathan Mattingly for fruitful
discussions. 

\clearpage

\end{document}